\newtheorem{theorem}{Theorem}[section]
\newtheorem{lemma}[theorem]{Lemma}
\newtheorem{corollary}[theorem]{Corollary}
\theoremstyle{definition}
\newtheorem{definition}[theorem]{\bf Definition}
\newcommand{\restr}{\mbox{\raisebox{.5mm}{$\upharpoonright$}}}
\newcommand{\la}{\langle}
\newcommand{\ra}{\rangle}
\newcommand{\bigset}[1]{\big\{ #1 \big\}}
\renewcommand{\leq}{\leqslant}
\renewcommand{\geq}{\geqslant}
\renewcommand{\le}{\leqslant}
\renewcommand{\ge}{\geqslant}
\newcommand{\vph}{\varphi}
\newcommand{\M}{{\mathfrak M}}
\newcommand{\B}{\mathcal{B}}
\newcommand{\K}{\mathcal{K}}
\renewcommand{\P}{\mathcal{P}}
\newcommand{\IPC}{{\sf IPC}}
\newcommand{\Log}{{\sf L}}
\newcommand{\KC}{{\sf KC}}
\newcommand{\wlem}{{w.l.e.m.}}
\newcommand{\Op}{{\sf Op}}
\newcommand{\op}{{\rm op}}
\newcommand{\Alg}{{\sf Alg}}
\newcommand{\Kr}{{\sf Kr}}
\newcommand{\Th}{{\rm Th}}
\newcommand{\join}{+}
\renewcommand{\Join}{{\textstyle \sum}}
\renewcommand{\phi}{\varphi}
\begin{document}

\title[Generalizations of the Weak Law of the Excluded Middle]
{Generalizations of the Weak Law of the Excluded Middle}

\author[A. Sorbi]{Andrea Sorbi}
\address[Andrea Sorbi]{University of Siena \\
Dipartimento di Scienze Matematiche ed Informatiche ``Roberto Magari''\\
Pian dei Mantellini 44, 53100 Siena, Italy. } \email{sorbi@unisi.it}
\author[S. A. Terwijn]{Sebastiaan A. Terwijn}
\address[Sebastiaan A. Terwijn]{Radboud University Nijmegen\\
Department of Mathematics\\
P.O. Box 9010, 6500 GL Nijmegen, the Netherlands.
} \email{terwijn@math.ru.nl}

\begin{abstract}
We study a class of formulas generalizing the
weak law of the excluded middle, and
provide a characterization of these formulas
in terms of Kripke frames and Brouwer algebras.
We use these formulas to separate logics corresponding to
factors of the Medvedev lattice.
\end{abstract}

\subjclass{%
03D30, 
03B55, 
03G10. 
}

\date{\today}

\maketitle

\section{The weak law of the excluded middle}

Let $\IPC$ denote the intuitionistic propositional calculus.
The weak law of the excluded middle (\wlem\ for short) is the principle
\begin{equation} \label{wlem}
\neg p \vee \neg\neg p.
\end{equation}
We view this as an axiom {\em schema\/}, in which we can substitute any
formula for the variable $p$.
Consider the logic $\IPC + \neg p \vee \neg\neg p$, that is, the
closure under deductions and substitutions of $\IPC$ and the \wlem\
The logic $\IPC + \neg p \vee \neg\neg p$ has been studied
extensively, and is known in the literature under various names.
It has been called
\begin{itemize}
\item {\em the logic of the weak law of the excluded middle\/} by Jankov,
\item {\em Jankov logic\/} by various Russian authors,
\item {\em De Morgan logic\/} by various American authors,
\item {\em testability logic\/} by some others, and
\item $\KC$ by still many others.
\end{itemize}
The term {\em principle of testability\/} for $\neg p \vee \neg\neg p$ goes
back to Brouwer himself.
In \cite[p80]{Brouwer1964} he writes (our comment in brackets):
\begin{quote}
``Another corollary of the simple principle of the
excluded third [i.e.\ $\tau\vee\neg \tau$] is the
{\em simple principle of testability},
saying that every assignment $\tau$
of a property to a mathematical entity can be {\em tested\/},
i.e.\ proved to be either non-contradictory [$\neg\neg\tau$]
or absurd [$\neg\tau$].''
\end{quote}
Apparently the name $\KC$ derives from
Dummett and Lemmon \cite{DummettLemmon}, who used $\sf LC$ to denote the
``linear calculus'', and K alphabetically follows~L,
hence~$\KC$.

In this paper we will study the following sequence $\{\vph_{k}\}_{k \ge 1}$
of formulas generalizing the \wlem:
\begin{definition}
Let $\vph_1 = \neg p \vee \neg\neg p$, and
for every $k>1$ define
\begin{equation} \label{eq:phik}
\vph_k =
\bigvee_{i\neq j} \big(\neg p_i \rightarrow \neg p_j\big) \vee
\neg(\neg p_1\wedge\ldots \wedge \neg p_k)
\end{equation}
(where $1 \le i,j \le k$).
\end{definition}
Notice that the formula $\vph_1$ can be seen as a special case of $\vph_k$:
indeed, $\vph_k$ is equivalent over $\IPC$ to
\begin{equation}
\neg p_1 \vee \ldots \vee \neg p_k \vee
\bigvee_{i\neq j} \big(\neg p_i \rightarrow \neg p_j\big) \vee
\neg(\neg p_1\wedge\ldots \wedge \neg p_k)
\end{equation}
because $\neg p_i$ implies $\neg p_j \rightarrow \neg p_i$ in $\IPC$.
Then $\vph_1$ is the special case $k=1$.

Also note that $\IPC$ proves $\vph_k \rightarrow \vph_{k+1}$ for every $k\geq 1$.
This follows for example from Theorem~\ref{thm:char}, or from
Theorem~\ref{thm:algebraic} below.

Below, we will study the logics $\IPC + \vph_k$, which again is the
deductive closure of $\IPC$ and the axiom schema $\vph_k$.
In particular $\IPC + \vph_k$ proves any substitution instance of~$\vph_k$.

\section{Kripke semantics}

In this section we characterize the formulas $\vph_k$
in \eqref{eq:phik} in terms of Kripke frames,
and relate them to a class of formulas introduced by
Smorynski~\cite{Smorynski}.

We briefly recall some elementary notions about Kripke semantics.
For unexplained terminology about Kripke frames and models we refer
the reader to~\cite{ChagrovZakharyaschev} or~\cite[p67]{Gabbay}.

A \emph{Kripke frame\/} $\langle K, R \rangle$ is a nonempty set $K$,
partially ordered by an \emph{accessibility relation\/} $R$.
Throughout this paper, we will work with Kripke frames
that have a {\em root\/}, that is,
a least element with respect to~$R$,
though this is not standardly part of the definition.
As usual, we distinguish between models and frames:
A \emph{Kripke model} $\la K,R,V\ra$ is a Kripke frame
together with a {\em valuation\/} $V$, that associates with
every variable $p$ a set $V(p)\subseteq K$, such that if
$x\in V(p)$ and $xRy$ then $y\in V(p)$ for every $x$ and~$y$.
Now the forcing relation $x \Vdash\vph$, with $x\in K$ and $\vph$
a formula, is defined by
\begin{itemize}
\item $x \Vdash p$ if $x\in V(p)$;
\item $x \Vdash \phi \wedge \psi$ if and only if $x \Vdash \phi $
and $x \Vdash  \psi$;
\item $x \Vdash \phi \vee \psi$ if and only if $x \Vdash \phi $
or $x \Vdash  \psi$;
\item
$x \Vdash \phi \rightarrow \psi$ if and only if for every $y$ with $x\, R y$, if
$y \Vdash \phi $ then $y \Vdash  \psi$;
\item
$x \Vdash \neg \phi$ if and only if there is no $y$ with $x\, R y$ and $y \Vdash \phi $.
\end{itemize}
A formula $\phi$ \emph{holds\/} in a frame $K$, denoted by $K \models \phi$,
if $K \Vdash \phi$ (meaning that $x \Vdash \phi$ for every $x \in K$), for
every valuation $V$ on the frame. A logic $\Log$ is {\em complete with
respect to\/}, or {\em characterizes\/}, a class of frames $\K$ if a formula
is derivable in $\Log$ if and only if it holds on every frame in $\K$.

\begin{definition}
A Kripke frame with accessibility relation $R$ has {\em topwidth\/} $k$ if it
has $k$ maximal nodes $x_1,\ldots,x_k$ such that for every $y\in K$ there is
an $i$ with $y R x_i$
\end{definition}

Following Jankov~\cite{Jankov}, Gabbay~\cite[p67]{Gabbay} showed that the logic
$\IPC+ \neg p \vee \neg\neg p$ is complete with respect
to the class of Kripke frames of topwidth~1.
Smorynski~\cite{Smorynski} introduced, for every $k\ge 1$, the formula
\begin{equation}\label{Smor}
\sigma_{k}=
\bigwedge_{0\leq i < j \leq k} \neg\big(\neg p_i \wedge \neg p_j\big)
\rightarrow
\bigvee_{0 \le i \le k} \Big(\neg p_i \rightarrow \bigvee_{j\neq i} \neg p_j\Big)
\end{equation}
and showed that the logic $\IPC+\sigma_{k}$ characterizes
the class of Kripke frames of topwidth at most $k$ (henceforth
we refer to this result as Smorynski's Completeness Theorem).
In particular,
$\IPC$ proves that  $\sigma_k \rightarrow \sigma_{k+1}$
and $\IPC+\sigma_{1}$ coincides with the logic of the \wlem.
Note that $\vph_k$ has $k$ variables and $\sigma_k$ has $k+1$.
The relation between these formulas is sorted out below.

We now turn to a characterization of the formulas $\vph_k$ in~(\ref{eq:phik})
in terms of Kripke frames. We start with some preliminaries about canonical
models. For more on canonical models we refer to~\cite{ChagrovZakharyaschev}.
The \emph{canonical model\/} $K$ of a logic $\Log$ containing $\IPC$ consists
of tableaux, that is, pairs $(\Gamma, \Delta)$ of sets of formulas,
satisfying the following properties:\footnote{%
Gabbay~\cite{Gabbay} uses saturated sets of formulas to define the canonical
model, which is similar but different.}
\begin{enumerate}[(i)]
\item \label{Kpr1} $(\Gamma, \Delta)$ is {\em consistent\/} with $\Log$,
    meaning that for no $\phi_1, \ldots \vph_n\in \Delta$, $\Gamma$
    proves $\vph_1\vee\ldots\vee \vph_n$ over $\Log$,
\item \label{Kpr2}
$(\Gamma, \Delta)$ is {\em maximal\/} in the sense that
$\Gamma\cup\Delta$ is the set of all formulas.
\end{enumerate}
The accessibility relation $R$ in the canonical model is
defined by
$$
(\Gamma,\Delta)\, R \,(\Gamma',\Delta') \Longleftrightarrow
\Gamma\subseteq \Gamma' \Longleftrightarrow \Delta \supseteq \Delta'.
$$
This defines the canonical frame, and to make it into
a model it is defined that every atomic formula in $\Gamma$
is forced in the node $(\Gamma,\Delta)$.
It is a basic property of $K$ that for every node
$(\Gamma, \Delta)$ and every formula $\vph$,
$$
(\Gamma, \Delta) \Vdash \vph \Longleftrightarrow \vph\in\Gamma.
$$
Note that it follows from properties (\ref{Kpr1}) and (\ref{Kpr2})
that $\Gamma$ is closed under $\Log$-provability.

\begin{lemma} \label{lem:compdir1}
Suppose $K$ is a Kripke frame of topwidth $n+1$ in which
$\vph_k$ does not hold.
Then $\binom{n}{\lfloor n/2\rfloor} \geq k$.
\end{lemma}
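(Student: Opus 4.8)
The plan is to read off the failure of $\vph_k$ at a node as a statement about the maximal nodes lying above it, and to recognize the two kinds of disjuncts in $\vph_k$ as encoding, respectively, an antichain condition and the existence of one ``free'' maximal node; Sperner's theorem then delivers the bound.

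First I would fix a valuation $V$ and a node $x$ with $x \not\Vdash \vph_k$. Let $N_x$ be the set of indices $\ell$ of maximal nodes $x_\ell$ with $x R x_\ell$; by the definition of topwidth every node above $x$ sees some $x_\ell$, so $N_x \neq \emptyset$ and $|N_x| \le n+1$. For each variable $p_i$ set $F_i = \{\ell \in N_x : x_\ell \Vdash p_i\}$. The technical lemma I would prove first is that for any $z \ge x$ one has $z \Vdash \neg p_i$ if and only if $F_i$ is disjoint from $M(z) := \{\ell \in N_x : z R x_\ell\}$: the forward direction is immediate from monotonicity of forcing, and the converse uses that any $w \ge z$ forcing $p_i$ sees a maximal node, which then also forces $p_i$.

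Next I would unpack $x \not\Vdash \vph_k$ disjunct by disjunct. Since $x$ forces no disjunct $\neg p_i \rightarrow \neg p_j$, for each ordered pair $i \neq j$ there is a node $z \ge x$ with $z \Vdash \neg p_i$ and $z \not\Vdash \neg p_j$; by the lemma this yields an index $\ell \in M(z)$ with $\ell \in F_j$ and $\ell \notin F_i$, so $F_j \not\subseteq F_i$. Ranging over all pairs shows that $F_1, \ldots, F_k$ are pairwise incomparable, hence form an antichain of $k$ distinct sets. Since $x$ also fails the last disjunct $\neg(\neg p_1 \wedge \ldots \wedge \neg p_k)$, there is a node $y \ge x$ with $y \Vdash \neg p_i$ for all $i$, and any $\ell_0 \in M(y)$ then lies in none of the $F_i$. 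Thus every $F_i$ is contained in $N_x \setminus \{\ell_0\}$, a set of size $|N_x| - 1 \le n$.

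Finally, Sperner's theorem bounds an antichain in the Boolean lattice on a set of size $s$ by $\binom{s}{\lfloor s/2\rfloor}$; applied with $s = |N_x| - 1 \le n$, and using that the central binomial coefficients are nondecreasing, this gives $k \le \binom{|N_x|-1}{\lfloor (|N_x|-1)/2\rfloor} \le \binom{n}{\lfloor n/2\rfloor}$, as required. I expect the main obstacle to be the bookkeeping in the forcing-characterization lemma and, conceptually, spotting that the role of the negation disjunct is precisely to free up one of the $n+1$ maximal nodes, which is what turns the naive Sperner bound $\binom{n+1}{\cdots}$ into the sharper $\binom{n}{\cdots}$.
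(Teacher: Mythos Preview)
Your proof is correct and follows essentially the same route as the paper: reserve one maximal node via the failed disjunct $\neg(\neg p_1\wedge\ldots\wedge\neg p_k)$, read off an antichain of subsets of the remaining $\le n$ maximal nodes from the failed implications $\neg p_i\rightarrow\neg p_j$, and apply Sperner's theorem. Your version is in fact more carefully written than the paper's (you work in the cone above the falsifying node, prove the forcing characterization of $\neg p_i$ explicitly, and handle $|N_x|-1<n$ via monotonicity of the central binomial), but the underlying argument is the same.
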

\begin{proof}
Under the assumptions,
we prove that the power set $\P(\{1,\ldots,n\})$ has an antichain of
size~$k$. The lemma then follows from Sperner's Theorem,
(\cite{Sperner1928}; cf.\ also \cite{Proofs}) stating
that $\binom{n}{\lfloor
n/2\rfloor}$ is the greatest number $k$ for which there is an antichain
of $k$ pairwise incomparable subsets of $\{1,\ldots, n\}$.

Since there is a model on the frame $K$ that falsifies $\vph_k$, there must
be a maximal node in which $\neg p_1\wedge\ldots\wedge \neg p_k$ holds. This
leaves $n$ nodes to falsify all implications $\neg p_i \rightarrow \neg p_j$
with $i\neq j$. Label these nodes by $1,\ldots,n$. Let $S_i \subseteq
\{1,\ldots,n\}$ be the set of nodes where $p_i$ holds, with $i = 1,\ldots,
k$. Then the sets $S_i$ form an antichain since for every pair $i\neq j$
there is a node that falsifies $\neg p_i \rightarrow \neg p_j$, hence in
which $p_i$ and $\neg p_j$ hold.
\end{proof}

\begin{lemma} \label{lem:compdir2}
Suppose $(\Gamma_1,\Delta_1),\ldots,(\Gamma_n,\Delta_n)$ are distinct
maximal nodes in the canonical model of $\Log$. Then for every $S\subseteq
\{1,\ldots,n\}$ there is a formula $A$ such that $A \in \Gamma_j$ if and only
if $j \in S$.
\end{lemma}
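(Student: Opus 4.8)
The plan is to manufacture, for each index $i$, a single ``characteristic'' formula $C_i$ that lies in $\Gamma_i$ but in none of the other $\Gamma_l$, and then to take a suitable disjunction of these to realize an arbitrary target set $S$. Three standard properties of the nodes carry the argument. First, as noted just before the lemma, each $\Gamma_i$ is closed under $\Log$-provability, hence under conjunction introduction, and each $\Gamma_i$ is consistent, so it contains no contradiction. Second, each $\Gamma_i$ is \emph{prime}: if $\alpha\vee\beta\in\Gamma_i$ then $\alpha\in\Gamma_i$ or $\beta\in\Gamma_i$. This follows from properties (\ref{Kpr1}) and (\ref{Kpr2}): if both $\alpha,\beta\notin\Gamma_i$, then by maximality $\alpha,\beta\in\Delta_i$, while $\Gamma_i$ proves $\alpha\vee\beta$, contradicting consistency. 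Third, distinct maximal nodes are pairwise $R$-incomparable: if $\Gamma_i\subseteq\Gamma_j$ then $(\Gamma_i,\Delta_i)\,R\,(\Gamma_j,\Delta_j)$, and maximality of $(\Gamma_i,\Delta_i)$ forces the two nodes to coincide, contrary to distinctness. Hence for every ordered pair $i\neq j$ we have $\Gamma_i\not\subseteq\Gamma_j$, so I may fix a formula $B_{ij}\in\Gamma_i\setminus\Gamma_j$.

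Next I would set $C_i=\bigwedge_{j\neq i}B_{ij}$, interpreting the empty conjunction as a fixed theorem in the degenerate case $n=1$. Since every conjunct lies in $\Gamma_i$ and $\Gamma_i$ is closed under conjunction introduction, $C_i\in\Gamma_i$. Conversely, for $l\neq i$ we have $B_{il}\notin\Gamma_l$; as $C_i\to B_{il}$ is provable and $\Gamma_l$ is closed under $\Log$-provability, $C_i\in\Gamma_l$ would force $B_{il}\in\Gamma_l$, a contradiction. Thus $C_i\in\Gamma_l$ if and only if $l=i$, for every $l\in\{1,\ldots,n\}$.

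Finally, given $S\subseteq\{1,\ldots,n\}$, I would take $A=\bigvee_{i\in S}C_i$ when $S\neq\emptyset$, and let $A$ be a fixed contradiction when $S=\emptyset$. If $j\in S$, then $C_j$ is a disjunct of $A$ and $C_j\in\Gamma_j$, so $A\in\Gamma_j$ by closure under provability. If $j\notin S$, then applying primeness to the disjunction shows that $A\in\Gamma_j$ would yield some $i\in S$ with $C_i\in\Gamma_j$; but $i\neq j$ contradicts the previous paragraph, so $A\notin\Gamma_j$. (For $S=\emptyset$, $A\notin\Gamma_j$ is immediate from consistency.) Hence $A\in\Gamma_j$ if and only if $j\in S$, as required.

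The only genuinely substantive points are the primeness of the nodes and the incomparability of distinct maximal nodes: together they supply both the separating formulas $B_{ij}$ and the transfer of non-membership across the disjunction, while the rest is routine closure bookkeeping. I expect the incomparability step to be the one most worth stating carefully, since it is precisely where the maximality hypothesis is used.
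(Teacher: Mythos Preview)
Your proof is correct and follows essentially the same route as the paper: use incomparability of distinct maximal $\Gamma_i$'s to pick separating formulas $B_{ij}\in\Gamma_i\setminus\Gamma_j$, conjunct them into characteristic formulas $C_i$, and then disjunct over $S$. You supply more detail than the paper---in particular the explicit primeness argument and the handling of the degenerate cases $n=1$ and $S=\emptyset$---but the structure is the same.
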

\begin{proof}
By maximality, the $\Gamma_i$ are pairwise $\subseteq$-incomparable, hence
for every $i \ne j$, there is a formula $A_{i,j}\in \Gamma_i - \Gamma_j$.
Hence, taking, $A_i=\bigwedge_{j \ne i} A_{i,j}$,
for every $i$,  it is easy to see that $(\Gamma_i, \Delta_i)\Vdash A_i
\rightarrow \neg A_j$
for every $i\neq j$. Now let $A = \bigvee_{j \in S} A_j$.
\end{proof}

\begin{theorem} \label{thm:char}
$\IPC + \vph_k$ is complete with respect to
the class of Kripke frames of topwidth at most~$n$,
where $n$ is minimal such that
$$
\binom{n}{\lfloor n/2\rfloor} \geq k.
$$
\end{theorem}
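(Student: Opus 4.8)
The plan is to prove the two halves of the completeness statement separately, using the two preceding lemmas as the engines: Lemma~\ref{lem:compdir1} drives soundness (every theorem of $\IPC+\vph_k$ holds on frames of topwidth $\le n$) and Lemma~\ref{lem:compdir2} drives completeness (every non-theorem is refuted on such a frame). The conceptual pivot for the second half is the following claim about the canonical model $K$ of $\Log=\IPC+\vph_k$: \emph{every node of $K$ has at most $n$ maximal nodes above it.} Granting this, the submodels of $K$ generated by a single point are exactly the frames of topwidth $\le n$ that are needed, and the standard truth lemma for $K$ finishes the argument.

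For soundness, since $\IPC$ is valid on all frames and modus ponens and substitution both preserve validity, it suffices to check that $\vph_k$ itself holds on every frame of topwidth $m\le n$. Arguing contrapositively, if $\vph_k$ failed on such a frame with $2\le m\le n$, then, writing $m=m'+1$, Lemma~\ref{lem:compdir1} would give $\binom{m'}{\lfloor m'/2\rfloor}\ge k$ with $1\le m'\le n-1<n$, contradicting the minimality of $n$. The boundary case $m=1$ is handled separately: frames of topwidth $1$ already validate $\vph_1$ (the result of Jankov and Gabbay recalled above), and $\IPC\vdash \vph_1\rightarrow\vph_k$.

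For completeness I would establish the pivotal claim by contradiction. Suppose some node $w=(\Gamma,\Delta)$ had $n+1$ distinct maximal nodes $(\Gamma_1,\Delta_1),\dots,(\Gamma_{n+1},\Delta_{n+1})$ above it. Since $\binom{n}{\lfloor n/2\rfloor}\ge k$, Sperner's Theorem supplies $k$ pairwise incomparable sets $T_1,\dots,T_k\subseteq\{1,\dots,n\}$; feeding each $T_i$ into Lemma~\ref{lem:compdir2} yields a formula $B_i$ with $B_i\in\Gamma_\ell\iff \ell\in T_i$, so that at the maximal (hence terminal) node $(\Gamma_\ell,\Delta_\ell)$ one has $(\Gamma_\ell,\Delta_\ell)\Vdash B_i\iff \ell\in T_i$. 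I then claim the substitution instance $\vph_k[\vec B/\vec p]$ fails at $w$: for each ordered pair $i\neq j$, incomparability gives some $\ell\in T_j\setminus T_i$, and the terminal node $(\Gamma_\ell,\Delta_\ell)$ forces $\neg B_i$ but not $\neg B_j$, refuting $\neg B_i\rightarrow\neg B_j$; meanwhile the unused node $(\Gamma_{n+1},\Delta_{n+1})$ forces every $\neg B_i$ (since $n+1\notin T_i$), refuting $\neg(\neg B_1\wedge\dots\wedge\neg B_k)$. Thus $w\not\Vdash\vph_k[\vec B/\vec p]$, contradicting that this substitution instance is a theorem of $\Log$ and hence lies in $\Gamma$. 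With the claim in hand, given any underivable $\psi$ I would locate a node $w$ of $K$ with $w\not\Vdash\psi$, pass to the submodel generated by $w$---in which every point still lies below a maximal node (every consistent set extends to a maximal one) and at most $n$ of these remain---and read off a frame of topwidth $\le n$ refuting $\psi$.

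The step I expect to be the main obstacle is precisely the forcing computation inside the canonical model: controlling $x\Vdash\neg B_i$ at an arbitrary node seems to require understanding $B_i$ at non-maximal nodes, where Lemma~\ref{lem:compdir2} gives no information. The key to sidestepping this is to take all the refuting witnesses to be maximal nodes, where $x\Vdash\neg B_i$ collapses to the classical $x\not\Vdash B_i$ and is therefore completely governed by membership in the $T_i$. One should also keep an eye on the degenerate case $k=1$, where $n=1$ and the disjunction over $i\neq j$ is empty, to confirm that the argument specializes correctly to the known completeness of $\KC$.
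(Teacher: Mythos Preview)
Your proposal is correct and follows essentially the same route as the paper: soundness via the contrapositive of Lemma~\ref{lem:compdir1}, and completeness by showing that the cone $K^t$ above a refuting node $t$ in the canonical model of $\IPC+\vph_k$ has at most $n$ maximal elements, arguing by contradiction with Sperner's theorem and Lemma~\ref{lem:compdir2} to produce a substitution instance of $\vph_k$ that fails at~$t$. The only cosmetic differences are that the paper does not split off the topwidth-$1$ case in the soundness direction, and it makes explicit (as you should too) that Lemma~\ref{lem:compdir2} is applied to all $n+1$ maximal nodes so that $B_i\notin\Gamma_{n+1}$ is guaranteed, and that Zorn's lemma is invoked to place every node of $K$ below a maximal one.
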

\begin{proof}
For the right-to-left implication,
suppose $K$ is a frame of topwidth $m+1\le n$
in which $\vph_k$ does not hold. Then by Lemma~\ref{lem:compdir1},
$\binom{m}{\lfloor m/2\rfloor} \geq k$,
hence by minimality of $n$ we have $m\geq n$, a contradiction.
Hence any frame of topwidth $l\leq n$ satisfies $\vph_k$.

For the converse direction, we have to show that if $\vph$ is a formula that
$\IPC+\vph_k$ does not prove, then there is a Kripke frame of topwidth at
most~$n$,
where $n$ and $k$ are related as in the statement of the theorem, in which
$\vph$ does not hold, i.e.\ there is a model on this frame on which $\vph$
does not hold. We show that a part of the canonical model of $\IPC+\vph_k$
has this property.

Now if $\vph$ is not provable in $\IPC+\vph_k$, then
its negation is consistent, hence $\neg \vph$ is forced
at some node $t=(\Gamma, \Delta)$ of the canonical model,
and $\vph$ does not hold in $t$.
Let $K^t$ denote the part of $K$ that is $R$-reachable from $t$.
We prove that $K^t$ has the required property.

First we note that every node in $K$ is below an $R$-maximal
one: every path in $K$ has an upper bound (by taking unions on the
first coordinate and intersections on the second),
hence an application of Zorn's lemma gives a maximal element
above any node in $K$.

We now show that $K^t$ has at most $n$ $R$-maximal nodes.
Suppose for a contradiction that there exist at least $n+1$
distinct maximal nodes
\[
(\Gamma_1,\Delta_1),\ldots,(\Gamma_{n+1},\Delta_{n+1}).
\]
Since $\binom{n}{\lfloor n/2\rfloor} \geq k$ there is an
antichain $S_1,\ldots, S_k$ in $\P(\{1,\ldots,n\})$ of size~$k$.
For every $S_i$, with the help of Lemma~\ref{lem:compdir2}
choose a formula $A_i$ such that
\begin{equation} \label{Ai}
A_i \in \Gamma_j \Longleftrightarrow j\in S_i
\end{equation}
and such that $A_i \notin \Gamma_{n+1}$.
Note that by maximality it follows from \eqref{Ai} that
$$
\neg A_i \in \Gamma_j
\Longleftrightarrow A_i \notin \Gamma_j
\Longleftrightarrow j\notin S_i.
$$
But now we can prove that $\vph_k$ is not forced in $t$:
First $t \not \Vdash \neg(\neg A_1\wedge\ldots\wedge \neg A_k)$
because
$(\Gamma_{n+1},\Delta_{n+1}) \Vdash \neg A_1\wedge\ldots\wedge\neg A_k$
by choice of $A_i$.
Also $t \not \Vdash \neg A_i \rightarrow \neg A_{i'}$ for
every $i\neq i'$ with $i,i'\leq k$. Namely,
the elements $S_i$ and $S_{i'}$ of the antichain are incomparable,
hence $j\in S_{i'} - S_i$ for some $j\in\{1,\ldots,n\}$.
Thus, by definition of $A_i$, we have
$A_{i'}\in \Gamma_j$ and $\neg A_i \in \Gamma_j$, and hence
$(\Gamma_j,\Delta_j)\Vdash \neg A_i \wedge A_{i'}$.
So we see that $t$ does not force the formula
$\vph_k(A_1,\ldots,A_k)$ obtained from $\vph_k$ by
substituting $A_i$ for every variable $p_i$.
But then it follows that $t\not\Vdash \vph_k$, for if
$t\Vdash \vph_k$ then $t$ would also force $\vph_k(A_1,\ldots,A_k)$
because we work over the logic $\IPC+\vph_k$,
which by definition proves every substitution instance of~$\vph_k$.
\end{proof}

A logic $\Log$ is called {\em canonical\/} if every formula of $\Log$ holds in the
canonical frame of $\Log$. Note that the proof of Theorem~\ref{thm:char} shows
that the logics of $\vph_k$ are canonical in this sense.

Following \cite[p69]{Gabbay}, a condition $F$  on a partially ordered set
$\langle K,R,0 \rangle$ with least element $0$, is \emph{absolute} if it can
be formulated in higher order language (with symbols for $R, 0, =$), and for
every $\langle K,R,0\rangle$ satisfying $F$, there exists a finite $K_0
\subseteq K$ such that for every $K'$, with $K_0 \subseteq K' \subseteq K$,
we have that also $\langle K', R\restr K', 0\rangle$ satisfies $F$. It is
known, see e.g.~Gabbay~\cite[p69]{Gabbay}, that if $\Log$ is an intermediate
logic which characterizes a class of Kripke frames, consisting of exactly the
frames satisfying an absolute condition $F$, then $\Log$ also characterizes
the class of finite Kripke frames satisfying~$F$. An intermediate logic
$\Log$ is said to have the \emph{finite model property}, if for every $\phi$
with $\phi\notin \Log$, there exists a finite Kripke model which does not
satisfy $\phi$. By a classical theorem of Harrop (\cite{Harrop1958}; see also
\cite[p.~266]{Gabbay}), if an intermediate logic $\Log$ has the finite model
property and is finitely axiomatizable, then $\Log$ is decidable. Therefore
we have:

\begin{theorem}\label{thm:fmp}
Each $\IPC + \phi_k$ is complete with respect to the class of
{\em finite\/} Kripke frames with topwidth at most $n$, where $n$
is least such that $\binom{n}{\lfloor n/2\rfloor} \geq k$.
Moreover,  $\IPC + \phi_k$ is decidable.
\end{theorem}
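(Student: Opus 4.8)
The plan is to deduce this from Theorem~\ref{thm:char} together with the two general facts recalled just above the statement: Gabbay's transfer result for absolute conditions, and Harrop's theorem. The only substantive step is to verify that the property ``topwidth at most $n$'' is an \emph{absolute} condition in the sense defined above; once this is in place the rest is a matter of quoting the two cited theorems.

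First I would check absoluteness. Let $\langle K,R,0\rangle$ be a rooted frame of topwidth $m\le n$, with maximal nodes $x_1,\ldots,x_m$ witnessing this, so that every $y\in K$ satisfies $y R x_i$ for some $i$. Set $K_0=\{0,x_1,\ldots,x_m\}$, a finite subset of $K$. I claim that for every $K'$ with $K_0\subseteq K'\subseteq K$, the frame $\langle K',R\restr K',0\rangle$ again has topwidth at most $n$. Indeed, each $x_i$ remains maximal in $K'$, since a node lying above $x_i$ in $K'$ would lie above $x_i$ in $K$, contradicting maximality in $K$; and conversely every $y\in K'$ still satisfies $y R x_i$ for some $i$, so the $x_i$ are precisely the maximal nodes of $K'$ and they dominate all of $K'$. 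Moreover $0$ remains the least element of $K'$, so $K'$ is a legitimate rooted frame, of topwidth $m\le n$. Since the condition ``there exist $x_1,\ldots,x_n$, each maximal, such that every $y$ is below some $x_i$'' is expressible in the first- (hence higher-) order language with symbols $R,0,=$, the property ``topwidth at most $n$'' is absolute.

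With absoluteness established, I would apply Gabbay's transfer result. By Theorem~\ref{thm:char} the logic $\IPC+\vph_k$ characterizes exactly the class of rooted frames of topwidth at most $n$, and this is precisely the class of frames satisfying the absolute condition $F$ just discussed. Hence $\IPC+\vph_k$ also characterizes the class of \emph{finite} frames of topwidth at most $n$, which is the first assertion of the theorem. In particular, any formula $\phi$ not provable in $\IPC+\vph_k$ must fail on some finite frame of topwidth at most $n$, and so on some finite Kripke model; thus $\IPC+\vph_k$ has the finite model property.

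Finally, $\IPC+\vph_k$ is finitely axiomatizable, being obtained from the finitely axiomatizable calculus $\IPC$ by adjoining the single axiom schema $\vph_k$. Combining the finite model property with finite axiomatizability, Harrop's theorem yields that $\IPC+\vph_k$ is decidable, completing the proof. The one non-routine point is the verification of absoluteness in the second paragraph; the remaining steps are direct applications of the quoted results.
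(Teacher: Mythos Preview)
Your proof is correct and follows exactly the route taken in the paper: invoke Theorem~\ref{thm:char}, verify that ``topwidth at most $n$'' is an absolute condition, then apply Gabbay's transfer result and Harrop's theorem. The paper's own proof is a one-line appeal to absoluteness without spelling out the verification you give in your second paragraph, so your version is simply a more detailed rendering of the same argument.
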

\begin{proof}
The claim follows by the above quoted remark and the fact the
condition of being a Kripke frame with topwidth at most $n$, and $n$ least
such that  $\binom{n}{\lfloor n/2\rfloor} \geq k$, is absolute.
\end{proof}

Finally, we have the following additional characterization of
$\IPC+\vph_{k}$:

\begin{corollary}\label{cor:characterization}
$\IPC+\phi_{k}=\IPC+\sigma_{n}$,
for all $n$ and $k$ such that $n$ is minimal with
$\binom{n}{\lfloor n/2\rfloor} \geq k$.
\end{corollary}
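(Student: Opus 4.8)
The plan is to observe that the corollary is essentially a matching of two completeness theorems over one and the same class of Kripke frames, so no new combinatorics is needed. First I would fix $n$ and $k$ with $n$ minimal such that $\binom{n}{\lfloor n/2\rfloor}\ge k$. By Theorem~\ref{thm:char}, the logic $\IPC+\vph_k$ is complete with respect to the class $\K_n$ of Kripke frames of topwidth at most~$n$. By Smorynski's Completeness Theorem, the logic $\IPC+\sigma_n$ is complete with respect to this very same class $\K_n$.

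Next I would spell out what ``complete with respect to $\K_n$'' buys us, using the definition of completeness given in the paper: a logic $\Log$ characterizes $\K_n$ precisely when, for every formula $\psi$, we have $\psi\in\Log$ if and only if $\psi$ holds on every frame in $\K_n$. Applying this to both logics yields, for an arbitrary formula $\psi$,
$$
\psi\in\IPC+\vph_k \iff \K_n\models\psi \iff \psi\in\IPC+\sigma_n,
$$
where $\K_n\models\psi$ abbreviates that $\psi$ holds on every frame in $\K_n$. Since the two logics thus have exactly the same theorems, they are equal, which is the claim.

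The only point requiring care --- and the nearest thing to an obstacle --- is to make sure that the two completeness results really speak about the identical class of frames, i.e.\ that the integer $n$ attached to $\vph_k$ by Theorem~\ref{thm:char} is the same $n$ that indexes $\sigma_n$. This is guaranteed by the hypothesis that $n$ is minimal with $\binom{n}{\lfloor n/2\rfloor}\ge k$, which is exactly the relation between $n$ and $k$ appearing in Theorem~\ref{thm:char}; so both logics are pinned to the frames of topwidth at most $n$ for one and the same $n$. If one preferred to avoid invoking the infinite canonical frames, one could equally run the argument with Theorem~\ref{thm:fmp} together with Smorynski's theorem restricted to finite frames, since the condition of having topwidth at most $n$ is absolute; the conclusion is unchanged.
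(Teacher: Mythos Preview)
Your proposal is correct and is exactly the paper's approach: the paper's proof consists of the single sentence ``This follows from Theorem~\ref{thm:char} and Smorynski's Completeness Theorem,'' and you have simply unpacked why two logics complete with respect to the same class of frames must coincide.
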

\begin{proof}
This follows from Theorem~\ref{thm:char} and Smorynski's Completeness Theorem.
\end{proof}

Notice that the sequence of logics $\IPC+\phi_{k}$ is decreasing, but not
strictly decreasing, with respect to inclusion. Namely, if $k_{1} < k_{2}$
and $n$ is the least such that $\binom{n}{\lfloor n/2\rfloor} \geq k_{1}$,
but $n$ is also the least such that $\binom{n}{\lfloor n/2\rfloor} \geq
k_{2}$, then
\[
\IPC+\vph_{k_{1}}=\IPC+\vph_{k_{2}}=\IPC+\sigma_{n}.
\]

\section{Algebraic semantics} \label{sec:Brouwer}

A \emph{Brouwer algebra} is an algebra $\langle L, +, \times, \rightarrow,
\neg, 0,1\rangle$ where $\langle L, +, \times, 0,1\rangle$ is a bounded
distributive lattice (with $+$ and $\times$ denoting the operations of $\sup$
and $\inf$, respectively)
and $\rightarrow$ is a binary operation satisfying
\begin{equation}\label{eqn:arrow}
b \le a + c \Leftrightarrow a\rightarrow b \le c,
\end{equation}
or, equivalently,
\[
a\rightarrow b= \text{least }\{c: b \le a +c\},
\]
and $\neg$ is the unary operation, given by $\neg a= a \rightarrow 1$. A
Brouwer algebra $L$ \emph{satisfies} a propositional formula $\sigma$
(denoted by $L \models \sigma$) if whatever substitution of elements of $L$
in place of the propositional variables of $\sigma$ (interpreting the
connectives $\lor$, $\wedge$, $\rightarrow$, $\neg$ with the operations
$\times$, $+$, $\rightarrow$, $\neg$, respectively) yields the element~$0$.
(Note that this definition of truth is dual to that in a Heyting algebra; see
also the remarks on Heyting algebras below.) Let
$$
\Th(L)=\{\sigma: L\models \sigma\}.
$$
It is well known that $\IPC \subseteq \Th(L)$, for every Brouwer algebra $L$.
An intermediate logic $\Log$ is \emph{complete with respect to} a class
of Brouwer algebras, if for every formula $\sigma$, $\Log$ derives
$\sigma$ if and only if every algebra in the class satisfies~$\sigma$.

Recall that in a distributive lattice $L$, we have that an element $a \in L$
is join-irreducible if and only if $a \le x + y$ implies $a \le x$ or $a \le
y$, for every $x,y \in L$. Thus if $L$ is a Brouwer algebra, $b \in L$ with
$b=\sum X$, where $X$ consists of join-irreducible elements, then for every
$a \in L$,
\begin{equation}\label{arrow}
a \rightarrow b= \sum \{x \in X: x \not \le a\}:
\end{equation}
This follows from the fact that $b \le a + y$, where $y= \sum \{x \in X: x
\not \le a\}$, and  by join-irreducibility of each element of $X$, we have
that $x \le c$ for every $c$ such that $b \le a + c$ and every $x \in X$ such
that $x \nleq a$. Thus $y$ is the least such that $b \le a + y$.
Finally, if $X$ is an antichain of join-irreducible elements
in a distributive lattice, and $I, J \subseteq X$ are finite sets, then
\begin{equation}\label{incomp}
\sum I \le \sum J \Leftrightarrow I \subseteq J.
\end{equation}

Recall the following well-known construction (see \cite{Fitting}) which
associates with every Kripke frame a Brouwer algebra, whose
identities coincide with the formulas that hold in the frame. Let $K$ be a
given Kripke frame, with accessibility relation $R$:
a subset $A \subseteq K$ is \emph{open}, if for every
$x,y\in K$ we have that $x \in A$ and $x R y$ then $y \in A$.
Let $\Op(A)$ be the collection of open subsets of $K$.

\begin{lemma}[\cite{Fitting}]\label{lem:Fitting}
The distributive lattice $\Alg(K)=\langle \Op(K), +, \times, \rightarrow 0,
1\rangle$ is a Brouwer algebra, where $A+B=A\cap B$, $A \times B=A \cup B$,
$A\rightarrow B=\{x\in K: (\forall y \in K)
[x R y \wedge y \in A \Rightarrow y \in B]\}$,
$0=K$, and $1=\emptyset$. Moreover
$$
\{\phi: K \models \phi\}=\{\phi: \Alg(K)\models \phi\}.
$$
\end{lemma}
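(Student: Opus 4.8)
The plan is to treat the two assertions separately: first that $\Alg(K)$ is a Brouwer algebra, and then the identity of the associated logics. For the first part, the useful observation is that the order induced by the lattice operations is reverse inclusion. Since $A + B = A \cap B$ is the supremum, $A \le B$ holds exactly when $B \subseteq A$, so that $0 = K$ and $1 = \emptyset$ are indeed the least and greatest elements. I would begin by checking closure: open sets (those closed upward under $R$) are closed under arbitrary unions and intersections, and both $\emptyset$ and $K$ are open; moreover $A \rightarrow B$ is again open, which follows from transitivity of $R$, since if $x \in A \rightarrow B$ and $xRz$, then any $w$ with $zRw$ and $w \in A$ also satisfies $xRw$, whence $w \in B$. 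Distributivity and boundedness are then inherited from $\langle \P(K), \cap, \cup\rangle$.

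The only nonroutine point is the adjunction \eqref{eqn:arrow}, which in the present order reads
\[
A \cap C \subseteq B \iff C \subseteq (A \rightarrow B).
\]
For the forward direction, given $x \in C$ and any $y$ with $xRy$ and $y \in A$, openness of $C$ gives $y \in C$, so $y \in A \cap C \subseteq B$; hence $x \in A \rightarrow B$. For the converse, if $x \in A \cap C$ then $x \in A \rightarrow B$, and applying the defining condition to $y = x$ (using reflexivity of $R$) with $x \in A$ yields $x \in B$. Thus $A \rightarrow B$ is exactly the least $C$, in the lattice order, with $A \cap C \subseteq B$, which is precisely the requirement on $\rightarrow$, so $\Alg(K)$ is a Brouwer algebra.

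For the second part the key remark is that a valuation $V$ on the frame and an assignment of elements of $\Op(K)$ to the propositional variables are literally the same datum, since both send each variable to an open set. I would then prove by induction on $\phi$ that
\[
\{x \in K : x \Vdash \phi\} = \phi^V,
\]
where $\phi^V$ denotes the value of $\phi$ in $\Alg(K)$ under the assignment $p \mapsto V(p)$. The base case is the definition of $V$, and each connective matches clause by clause once the duality is kept straight: the forcing clause for $\wedge$ matches $+ = \cap$, that for $\vee$ matches $\times = \cup$, and the clauses for $\rightarrow$ and $\neg$ reproduce the definitions of the Brouwer operations $A \rightarrow B$ and $\neg A = A \rightarrow \emptyset$ verbatim. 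Finally, $K \models \phi$ means $\{x : x \Vdash \phi\} = K$ for every $V$, which by the induction is the same as $\phi^V = 0$ for every assignment, i.e.\ $\Alg(K) \models \phi$; this yields the claimed equality of logics.

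The main obstacle here is bookkeeping rather than mathematics: one must carry the order-reversal ($0 = K$, $\le$ equal to reverse inclusion, truth equal to $0$) consistently through both the adjunction and the inductive matching, where it is easy to flip an inclusion in the wrong direction. The substantive use of the hypotheses on $R$ is localized, namely transitivity for openness of $A \rightarrow B$ and reflexivity for the converse half of the adjunction.
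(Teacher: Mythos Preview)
Your argument is correct. The paper, however, does not verify any of this directly: its proof simply invokes Fitting's theorem that $\langle \Op(K), \cup, \cap, \rightarrow, \emptyset, K\rangle$ is a \emph{Heyting} algebra satisfying exactly the formulas that hold in $K$, and then observes that passing to the dual lattice $L^{\op}$ turns a Heyting algebra into a Brouwer algebra without changing the set of validated formulas (since $L \models \phi$ is defined to mean $L^{\op} \models^H \phi$). So the paper's route is ``cite the Heyting version, then dualize,'' whereas you carry out the verification from scratch on the Brouwer side, checking the adjunction $A \cap C \subseteq B \iff C \subseteq (A \rightarrow B)$ and the formula induction by hand. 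What your approach buys is self-containment: a reader need not chase the reference or keep the Heyting/Brouwer translation in mind, and the uses of reflexivity and transitivity of $R$ are made explicit. What the paper's approach buys is economy, since all of the work is already done in \cite{Fitting} and the only new content is the one-line duality remark; it also makes clear that the lemma is not original to the present paper.
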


\begin{proof}
See \cite{Fitting}. In fact, the theorem in \cite{Fitting} is formulated in
terms of Heyting algebras. Recall that $L$ is a Heyting algebra if the dual
$L^{\op}$ is a Brouwer algebra. If $L$ is a Heyting algebra, we write $L
\models^H \sigma$, if $L^{\op} \models \sigma$. In \cite{Fitting} it is shown
that the collection of open sets together with the operations $+=\cup$,
$\times=\cap$, $0=\emptyset$, $1=K$, and
$$
A\rightarrow B=\bigset{x\in K: (\forall y \in K)
[x R y \wedge y \in A \Rightarrow y \in B]},
$$
is a Heyting algebra which satisfies the same formulas as $K$. To prove our
result, given a frame $K$, apply Fitting's theorem to get a Heyting algebra,
and then take its dual: the claim then follows from the obvious fact that the
formulas satisfied (under $\models$) by a Brouwer algebra are the same as the
ones satisfied (under $\models^H$) by its dual Heyting algebra.
\end{proof}

Conversely, given a Brouwer algebra $L$ with meet-irreducible $0$,
let $I(L)$ be the collection of prime ideals of $L$,
which becomes a Kripke frame $\Kr(L)=\langle I(L), \subseteq \rangle$.
(Note that $\Kr(L)$ satisfies our assumption that all Kripke frames
have a root, since $0\in L$ is meet-irreducible, so that $\{0\}$ is
a prime ideal.)

\begin{lemma}\cite{Ono-Kripke}\label{lem:Ono}
For every Brouwer algebra $L$, we have
$$
\{\phi: L \models \phi\}\subseteq \{\phi: \Kr(L) \models \phi\}.
$$
Moreover, equality holds if $L$ is finite.
\end{lemma}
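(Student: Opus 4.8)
The plan is to compare the two notions of satisfaction by turning elements of $L$ into valuations on $\Kr(L)=\langle I(L),\subseteq\rangle$ and then proving the corresponding truth lemma. Given a substitution $\nu$ assigning to each variable $p_i$ an element $a_i\in L$, I would define the valuation $V_\nu$ by $V_\nu(p_i)=\{P\in I(L): a_i\in P\}$; each such set is open, since $a_i\in P$ and $P\subseteq Q$ force $a_i\in Q$. Writing $\nu(\vph)\in L$ for the value obtained by interpreting $\vee,\wedge,\to,\neg$ with $\meet,\join,\to,\neg$, the heart of the matter is the truth lemma
\[
P\Vdash_{V_\nu}\vph \;\Longleftrightarrow\; \nu(\vph)\in P \qquad (P\in I(L)),
\]
which I would prove by induction on $\vph$.

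The atomic case is the definition of $V_\nu$. For conjunction I would use that $P$ is a downward closed ideal, so $\nu(\psi),\nu(\chi)\in P$ iff $\nu(\psi)\join\nu(\chi)\in P$; for disjunction I would use that $P$ is prime, so $\nu(\psi)\meet\nu(\chi)\in P$ iff $\nu(\psi)\in P$ or $\nu(\chi)\in P$. The implication case is the technical core. Using the adjunction \eqref{eqn:arrow}, from $\nu(\psi)\to\nu(\chi)\in P$ and $P\subseteq Q$ with $\nu(\psi)\in Q$ one gets $\nu(\chi)\le \nu(\psi)\join(\nu(\psi)\to\nu(\chi))\in Q$; conversely, if $\nu(\psi)\to\nu(\chi)\notin P$, then $\nu(\chi)$ lies outside the ideal generated by $P\cup\{\nu(\psi)\}$ (again by \eqref{eqn:arrow}, since $\nu(\chi)\le p\join\nu(\psi)$ is equivalent to $\nu(\psi)\to\nu(\chi)\le p$), and the prime ideal (separation) theorem yields a prime $Q\supseteq P$ with $\nu(\psi)\in Q$ but $\nu(\chi)\notin Q$. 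Negation is the special case $\nu(\psi)\to 1$, using that prime ideals are proper, so $1\notin Q$. Since the intersection of all prime ideals of a distributive lattice is $\{0\}$, the truth lemma gives $\nu(\vph)=0$ iff $\vph$ is forced at every node under $V_\nu$.

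To finish I would argue as follows. If $L\models\vph$, then $\nu(\vph)=0$ for every substitution $\nu$, so by the truth lemma $\vph$ is forced everywhere under every substitution-induced valuation $V_\nu$. The remaining step — passing from validity under all valuations of the form $V_\nu$ to validity under an \emph{arbitrary} valuation on $\Kr(L)$ — is where I expect the main obstacle, since it amounts to the representability of an arbitrary open subset of $\Kr(L)$ in the form $\{P:a\in P\}$, i.e.\ to surjectivity of $a\mapsto\{P:a\in P\}$. When $L$ is finite this holds by Birkhoff duality (each open set is determined by the join-irreducibles it contains), so the induced valuations exhaust all valuations and the inclusion becomes an equality; this is exactly the ``moreover'' clause. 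In the general case I would reduce an arbitrary valuation $V$ refuting $\vph$ at a node $t$ to a substitution-induced valuation agreeing with $V$ on the data relevant to the forcing of $\vph$ on the subframe above $t$, and combine this with Lemma~\ref{lem:Fitting}, which identifies $\Kr(L)$-validity with $\Alg(\Kr(L))$-validity. This reduction is the delicate point that the finiteness hypothesis trivializes.
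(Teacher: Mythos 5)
Your truth lemma is correct and well executed: the induced valuations $V_\nu$ are well defined, the cases for $\wedge$ and $\vee$ correctly use the dual conventions of the paper (ideals are closed under $\join$ and downward closed; primality handles $\meet$), the implication case is a sound combination of the adjunction \eqref{eqn:arrow} with Stone's prime ideal separation theorem, and negation follows as you say. In the finite case, surjectivity of $a\mapsto\{P: a\in P\}$ onto the open sets of $\Kr(L)$ (Birkhoff duality) does make every valuation induced, so your argument genuinely proves the ``moreover'' clause. But for infinite $L$ the step you defer is not a missing detail --- it is the entire content of the asserted inclusion, and the reduction you sketch cannot work. Observe what your truth lemma actually delivers in general: refutations under induced valuations transfer back to the algebra, which gives the \emph{converse} inclusion $\{\phi:\Kr(L)\models\phi\}\subseteq\{\phi:L\models\phi\}$ (equivalently: your implication case shows that $a\mapsto\{P:a\in P\}$ is a Brouwer-algebra embedding of $L$ into $\Alg(\Kr(L))$, and validity is inherited by subalgebras). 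The stated inclusion runs the other way: from an \emph{arbitrary} valuation $V$ refuting $\vph$ at a node $t$ one must manufacture a substitution $\nu$ with $\nu(\vph)\neq 0$. The ``data relevant to the forcing of $\vph$ above $t$'' is the behaviour of all subformulas of $\vph$ on the entire (typically infinite) subframe above $t$, i.e.\ finitely many but completely arbitrary open sets, and these need not be representable in the form $\{P:a\in P\}$, nor approximable by representable ones in a way that preserves the forcing computation at $t$.

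There is a structural reason the sketched reduction must fail in general, not merely be delicate: combined with the converse inclusion that your own truth lemma yields, it would show that the theory of every Brouwer algebra (with meet-irreducible $0$) is the theory of a single Kripke frame, which collides with known Kripke-incompleteness phenomena for intermediate logics --- the same obstruction as non-canonicity in modal logic, where the bare frame of prime ideals admits strictly more valuations, hence more refutations, than the algebra controls. The paper does not attempt any of this: its proof of Lemma~\ref{lem:Ono} consists of citing Ono's theorem for Heyting algebras as a black box and translating it through the duality between $L$ and $L^{\op}$, with prime filters of $L^{\op}$ corresponding to prime ideals of $L$; and everywhere the lemma is subsequently used (Theorem~\ref{thm:algebraic} and its corollaries) only the finite case is invoked. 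So the self-contained part of your proposal --- the truth lemma, the resulting embedding, and the finite equality via Birkhoff duality --- is correct and is exactly what can be proved by these means; the general infinite-case inclusion should either be taken from Ono as the paper does, or the statement restricted to finite $L$, rather than obtained by the reduction you outline.
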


\begin{proof}
See \cite{Ono-Kripke}. Again, a few words may be spent on the proof, since
\cite{Ono-Kripke} uses Heyting algebras instead of Brouwer algebras. So,
suppose we are given a Brouwer algebra $L$, take its dual $L^{\op}$, which is
a Heyting algebra, and then use \cite{Ono-Kripke} to conclude that $\langle
F(L^{\op}), \subseteq\rangle$ (where $F(L^{\op})$ is the collection of prime
filters of $L^{\op}$) is a Kripke frame $K$ that satisfies $\{\phi: L^{\op}
\models^H \phi\}\subseteq \{\phi: K \models \phi\}$, with equality if
$L^{\op}$ is finite. The claim then follows from the fact that $\{\phi: L^{\op}
\models^H \phi\}=\{\phi: L \models \phi\}$, and $F(L^{\op})$ is order
isomorphic to $I(L)$ under $\subseteq$, as easily follows from recalling that
in a distributive lattice $L$, for every $X\subseteq L$, $X$ is a prime
filter if and only if $L-X$ is a prime ideal.
\end{proof}

Theorem~\ref{thm:fmp} has the following algebraic counterpart:

\begin{theorem} \label{thm:algebraic}
$\IPC + \vph_k$ is complete with respect to the class of all finite Brouwer
algebras $L$ with meet-irreducible $0$ and at most $n$ coatoms, where $n$ is
minimal such that $ \binom{n}{\lfloor n/2\rfloor} \geq k$.
\end{theorem}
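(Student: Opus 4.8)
The plan is to transfer the already-established Kripke-frame characterization (Theorem~\ref{thm:fmp}) across the correspondence between finite Brouwer algebras and finite Kripke frames supplied by Lemmas~\ref{lem:Fitting} and~\ref{lem:Ono}. The key observation to set up first is that, under the functors $\Alg(\cdot)$ and $\Kr(\cdot)$, the relevant combinatorial invariants match: the topwidth of a finite rooted frame $K$ equals the number of coatoms of the Brouwer algebra $\Alg(K)$, and dually the number of coatoms of a finite Brouwer algebra $L$ with meet-irreducible $0$ equals the topwidth of $\Kr(L)$. Concretely, the $R$-maximal nodes of $K$ correspond to the atoms of the lattice of open sets (singletons $\{x\}$ with $x$ maximal are open), so that under the order-reversal built into the Brouwer structure (where $1=\emptyset$ and $0=K$), maximal nodes become coatoms of $\Alg(K)$; and the condition that every node lies below some maximal node is exactly what makes these coatoms the $\sup$-generators sitting just below $1$.

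With this dictionary in place, I would prove the two inclusions separately. For soundness (if $\IPC+\vph_k \vdash \sigma$ then every finite $L$ in the class satisfies $\sigma$): given such an $L$, form $\Kr(L)$, which by the coatom count has topwidth at most $n$, hence satisfies $\vph_k$ by the easy right-to-left direction of Theorem~\ref{thm:char}; since $\Kr(L)\models \IPC+\vph_k$ it models $\sigma$, and then the finiteness clause of Lemma~\ref{lem:Ono} gives $\{\phi: L\models\phi\}=\{\phi:\Kr(L)\models\phi\}$, so $L\models\sigma$. For completeness (if $\IPC+\vph_k \nvdash \sigma$ then some finite $L$ in the class refutes $\sigma$): by Theorem~\ref{thm:fmp} there is a finite frame $K$ of topwidth at most $n$ with $K\not\models\sigma$; set $L=\Alg(K)$, which is a finite Brouwer algebra with at most $n$ coatoms, and by Lemma~\ref{lem:Fitting} we have $\{\phi:K\models\phi\}=\{\phi:\Alg(K)\models\phi\}$, so $L\not\models\sigma$. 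I must also check that $K$ of topwidth $\le n$ yields $\Alg(K)$ with meet-irreducible $0$; since $K$ is rooted, $0=K$ is meet-irreducible in $\Op(K)$, so this is automatic.

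The main obstacle I anticipate is not the logical bookkeeping but pinning down the exact correspondence between topwidth and coatom count, including the subtle role of the rootedness/meet-irreducibility hypothesis and the fact that $\Kr(\Alg(K))$ need not be isomorphic to $K$ on the nose. Because the two functors are not mutually inverse in general, I want to route each inclusion through whichever functor makes the invariant transparent, rather than composing them. A clean way to handle the coatom-topwidth identity is to verify directly that in $\Op(K)$ the coatoms (maximal proper open sets, i.e.\ open sets of the form $K\setminus\{x\}$ for $x$ a maximal node) biject with the maximal nodes of $K$, and symmetrically that the atoms of a finite distributive lattice $L$ correspond to the maximal prime ideals, which are the maximal nodes of $\Kr(L)$. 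Once that combinatorial equivalence is secured, the rest is a direct application of the quoted lemmas, so I would state the invariant identity as a short separate observation and then assemble the two inclusions as above.
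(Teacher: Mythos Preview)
Your approach is exactly the paper's: transfer the Kripke completeness (Theorem~\ref{thm:fmp}) through Lemmas~\ref{lem:Fitting} and~\ref{lem:Ono}, after checking that topwidth matches the number of coatoms and that rootedness of $K$ yields meet-irreducible $0$ in $\Alg(K)$. One small slip in your final paragraph: in the Brouwer order on $\Op(K)$ (where $1=\emptyset$) the coatoms are the \emph{minimal} nonempty open sets, i.e.\ the singletons $\{x\}$ with $x$ $R$-maximal, exactly as you said in your first paragraph---not the sets $K\setminus\{x\}$, which are open only when $x$ is minimal.
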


\begin{proof}
The proof follows from Theorem~\ref{thm:char},
Lemma~\ref{lem:Fitting}, Lemma~\ref{lem:Ono},
together with the following observations:

\begin{enumerate}
\item If $K$ has topwidth $n$, then $\Alg(K)$ has $n$ coatoms: indeed,
    for every maximal element $x$ in the frame, the singleton $\{x\}$ is
    open, and this is clearly a coatom in $\Alg(K)$; moreover the coatoms
    in $\Alg(K)$ are all of this form.

\item If a finite Brouwer algebra $L$  has $n$ coatoms, then $\Kr(L)$ is
    of topwidth $n$:
    indeed, in a finite Brouwer algebra $L$, the ideals
    generated by the coatoms are prime and contain all other prime
    ideals, generated by meet-irreducible elements. In other words the
    coatoms correspond exactly to the maximal elements in $\Kr(L)$.
\end{enumerate}
Finally,  notice that, for every Kripke frame $K$,
$\Alg(K)$ has meet-irreducible~$0$, since the Kripke frames
in this paper always have a least element.
\end{proof}

For finite Brouwer algebras, we may also describe the completeness property
in terms of join-irreducible elements joining to the greatest element $1$.
\begin{definition}
For every $n$, let $\mathfrak{B}_n$ denote the class of Brouwer algebras in
which the top element is the join of some antichain of $n$ join-irreducible
elements.
\end{definition}
\noindent
Notice that in any distributive lattice, if $\sum X=\sum Y$, where $X, Y$
are finite antichains of join-irreducible elements, then it follows from
(\ref{incomp}) that $X=Y$. Thus, in a finite distributive lattice $L$, or
more generally in a distributive lattice $L$ having the finite descending
chain condition (see e.g.~\cite[Theorem~III.2.2]{Balbes-Dwinger:Book}) each
element is the join of a unique antichain of join-irreducibles,
and thus $L$ belongs to $\mathfrak{B}_n$, for a unique $n$.

\begin{lemma}\label{lem:coatoms-joinirr}
If $L$ is a finite Brouwer algebra, then $L$ has exactly $n$
coatoms if and only if $L \in \mathfrak{B}_n$.
\end{lemma}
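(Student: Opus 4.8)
The plan is to reduce the biconditional to a single count. By the remark preceding the lemma, in a finite distributive lattice every element is the join of a unique antichain of join-irreducibles; let $X$ be the unique such antichain with $\sum X = 1$. Then $L \in \mathfrak{B}_n$ holds precisely when $|X| = n$, so it suffices to prove that the number of coatoms of $L$ equals $|X|$, which settles both directions at once. First I would identify $X$ concretely. Writing $J(L)$ for the set of join-irreducible elements of $L$, let $M \subseteq J(L)$ consist of those that are maximal in $J(L)$. Since $1 = \sum J(L)$ and, $J(L)$ being finite, every join-irreducible lies below some maximal one, we get $\sum M = 1$; as $M$ is an antichain of join-irreducibles, uniqueness yields $X = M$.

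Next, for each $m \in M$ I would set $c_m = \sum \{ a \in L : m \not\le a \}$. Join-irreducibility of $m$ (with respect to finite joins, valid since $L$ is finite) gives $m \not\le c_m$, so $c_m$ is the largest element of $L$ not above $m$. I claim $c_m$ is a coatom. Indeed $c_m < 1$, since $m \le 1$ but $m \not\le c_m$. Moreover $c_m + m = 1$: every $j \in J(L)$ with $m \not\le j$ satisfies $j \le c_m$, while every $j \in J(L)$ with $m \le j$ must equal $m$ by maximality of $m$; hence each join-irreducible lies below $c_m + m$, and therefore $1 = \sum J(L) \le c_m + m$. Now if $c_m < a$, then $m \le a$ (otherwise $a \le c_m$), so $a \ge c_m + m = 1$; thus nothing lies strictly between $c_m$ and $1$.

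It then remains to check that $m \mapsto c_m$ is a bijection from $M$ onto the coatoms of $L$. For injectivity, suppose $m \ne m'$ in $M$; since $M$ is an antichain of maximal join-irreducibles, $m' \not\le m$, so $m \le c_{m'}$ by definition of $c_{m'}$, and as $m \not\le c_m$ we conclude $c_m \ne c_{m'}$. For surjectivity, let $c$ be any coatom. From $c < 1 = \sum X$ there is some $m \in X = M$ with $m \not\le c$ (otherwise $\sum X \le c$); then $c \le c_m$ by definition of $c_m$, and since $c_m \ne 1$ while $c$ is covered by $1$, we get $c = c_m$. Hence the number of coatoms of $L$ equals $|M| = |X|$, and the lemma follows.

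The main obstacle is exactly the claim that $c_m$ is a coatom, which is the only place where maximality of $m$ in $J(L)$ enters: without it $c_m + m$ need not equal $1$, and $c_m$ may fail to be covered by the top. Once the identity $c_m + m = 1$ is established, both the coatom property and the bijection are routine.
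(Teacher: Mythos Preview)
Your argument is correct. Both your proof and the paper's establish a bijection between the antichain of join-irreducibles summing to $1$ and the set of coatoms, but the constructions differ. The paper works directly with the given antichain $b_1,\ldots,b_n$ and sets $\hat b_i=\sum_{j\ne i}b_j$; it then verifies that $\hat b_i$ is a coatom by analyzing the unique join-irreducible decomposition of any $b\ge\hat b_i$, and rules out extra coatoms by showing that any such $a$ would satisfy $b_i\le a$ for every~$i$. You instead first identify the antichain with the set $M$ of maximal join-irreducibles and define the associated coatom intrinsically as $c_m=\sum\{a:m\not\le a\}$, the largest element not above $m$; your key step $c_m+m=1$ then replaces the paper's decomposition argument. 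The two candidate coatoms coincide (indeed $\hat b_i\le c_{b_i}<1$ forces equality once either is known to be a coatom). Your route makes the role of maximality in $J(L)$ explicit and yields surjectivity of $m\mapsto c_m$ almost for free, while the paper's formulation ties the coatoms more visibly to the specific antichain $\{b_i\}$; neither gains real generality over the other, and both rely on finiteness in the same place (existence and uniqueness of the join-irreducible decomposition).
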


\begin{proof}
Suppose that $L \in \mathfrak{B}_n$ is finite, and let $b_1, \ldots, b_n$ be
the antichain of $n$ join-irreducible elements such that $1=\sum_{i=1}^n
b_i$. For every $i$, let $\hat{b_i}=\sum_{j\ne i}b_j$. We claim that each
$\hat{b_i}$ is a coatom. Indeed $\hat{b_i}<1$, as $b_i \nleq \hat{b_i}$;
moreover, assume that $\hat{b_i} \le b$, and let $b=\sum X$ where $X$ is an
antichain of join-irreducible elements. (Here we use that $L$ is finite.) By
join irreducibility, we have
\[
\{b_j: j \ne i\} \subseteq X \subseteq \{b_j: 1 \le j \le n\}
\]
thus either $\hat{b_i}=b$ or $b=1$. It follows that $L$ has at least $n$
coatoms. On the other hand, suppose that $L$ has also a coatom
$a\notin \{\hat{b_i}: 1\le i \le n\}$. Then for every~$i$,
$\hat{b_i}+ a=1$, thus $b_i \le \hat{b_i}+ a$,
hence by join irreducibility, $b_i \le a$.
This implies that $\sum_i b_i \le a$, hence $a=1$, a contradiction.

Conversely, suppose that $L$ is a finite Brouwer algebra that has $n$
coatoms. Since $L$ is finite, there exists $m$ such that
$L\in\mathfrak{B}_m$. On the other hand, the above argument shows that $m=n$,
so that $L \in \mathfrak{B}_n$.
\end{proof}

Let $\mathfrak{B}_n^\bot$ be the subclass of $\mathfrak{B}_n$, consisting of
the algebras with meet-irreducible $0$. It follows:

\begin{corollary}\label{cor:fin-char-II}
$\IPC + \vph_k$ is complete with respect to the class of finite
Brouwer algebras $\mathfrak{B}_n^\bot$, where $n$ is minimal such
that $ \binom{n}{\lfloor n/2\rfloor} \geq k$.
\end{corollary}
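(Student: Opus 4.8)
The plan is to derive Corollary~\ref{cor:fin-char-II} by combining Theorem~\ref{thm:algebraic} with Lemma~\ref{lem:coatoms-joinirr}. The former already establishes that $\IPC+\vph_k$ is complete with respect to the class of all finite Brouwer algebras $L$ with meet-irreducible $0$ and at most $n$ coatoms, where $n$ is minimal such that $\binom{n}{\lfloor n/2\rfloor}\geq k$. So the essential content of the corollary is to re-express the condition ``at most $n$ coatoms'' in terms of the classes $\mathfrak{B}_m$, namely as membership in $\bigcup_{m\leq n}\mathfrak{B}_m^\bot$, and then to observe that for the purposes of characterizing this logic it suffices to work with $\mathfrak{B}_n^\bot$ alone.

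First I would invoke Lemma~\ref{lem:coatoms-joinirr}: for a finite Brouwer algebra $L$, having exactly $m$ coatoms is equivalent to $L\in\mathfrak{B}_m$. Since the remark following the definition of $\mathfrak{B}_n$ shows that every finite distributive lattice belongs to $\mathfrak{B}_m$ for a \emph{unique} $m$, the condition ``$L$ has at most $n$ coatoms'' translates precisely into ``$L\in\mathfrak{B}_m^\bot$ for some $m\leq n$.'' Combined with the meet-irreducibility requirement on $0$, Theorem~\ref{thm:algebraic} therefore already gives completeness of $\IPC+\vph_k$ with respect to $\bigcup_{m\leq n}\mathfrak{B}_m^\bot$.

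The remaining step is to pass from this union down to the single class $\mathfrak{B}_n^\bot$. For this I would argue that every algebra in $\mathfrak{B}_m^\bot$ with $m\leq n$ embeds into, or is covered by the validities of, an algebra in $\mathfrak{B}_n^\bot$, so that adding the smaller classes does not change the set of satisfied formulas. Concretely, given $L\in\mathfrak{B}_m^\bot$ one can form a product or a suitable extension whose top is a join of $n$ join-irreducibles while preserving the failure of any formula that failed in $L$; equivalently, on the Kripke side, a frame of topwidth $m\leq n$ can be extended to one of topwidth exactly $n$ without validating any previously refuted formula, by adjoining fresh maximal nodes above the root. Since any formula refutable on some frame of topwidth $\leq n$ is then refutable on one of topwidth exactly $n$, the class $\mathfrak{B}_n^\bot$ already suffices for completeness.

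The main obstacle I expect is precisely this last reduction: one must verify that enlarging the topwidth from $m$ to exactly $n$ (or the number of coatoms from $m$ to $n$) genuinely preserves non-validity, i.e.\ that the extra maximal nodes cannot accidentally rescue a formula that the original algebra refuted. This is where care is needed, since forcing of negations and implications is sensitive to the presence of additional maximal nodes; the cleanest route is to route everything through the Kripke-frame characterization of Theorem~\ref{thm:char}, where adding isolated maximal points above the root affects only formulas mentioning the new atoms and leaves the refutation of $\vph$ intact. Once this is in place, the equivalence of the completeness classes is immediate and the corollary follows.
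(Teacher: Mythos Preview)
Your first step is exactly the paper's proof: the authors simply write ``Immediate from Theorem~\ref{thm:algebraic} and Lemma~\ref{lem:coatoms-joinirr}.'' In other words, they regard the corollary as nothing more than the translation, via Lemma~\ref{lem:coatoms-joinirr}, of the coatom condition in Theorem~\ref{thm:algebraic} into the $\mathfrak{B}$-language; they do not perform any further reduction from ``at most $n$'' to ``exactly $n$.'' So your second step goes beyond what the paper does.

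If one insists on reading the corollary as completeness with respect to the single class $\mathfrak{B}_n^\bot$ (exactly $n$), then a reduction is indeed needed, but the argument you sketch for it is not right. Your claim that ``adding isolated maximal points above the root affects only formulas mentioning the new atoms and leaves the refutation of $\vph$ intact'' is false as stated: there are no new propositional atoms, and adding maximal nodes changes the evaluation of \emph{every} formula containing $\neg$ or $\to$ at the root (for instance, $\neg\neg p$ can flip from true to false). The correct tool is a p-morphism: from the enlarged frame $K'$ map each new maximal node onto a fixed old maximal node of $K$ and leave $K$ fixed pointwise; this is a surjective p-morphism $K'\to K$, so any formula refuted on $K$ pulls back to a refutation on $K'$. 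That gives the reduction cleanly. But again, the paper itself does not carry out this step.
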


\begin{proof}
Immediate from Theorem~\ref{thm:algebraic}, and Lemma~\ref{lem:coatoms-joinirr}.
\end{proof}

Finally, we prove Theorem~\ref{thm:validity} below, which holds also of
Brouwer algebras that are not necessarily finite. We need a preliminary
lemma, which illustrates the range of $\neg$ in a Brouwer algebra from
$\mathfrak{B}_n$.

\begin{lemma} \label{lemman}
Let $L\in \mathfrak{B}_n$, and let $b_1, \ldots, b_n$ be an antichain of
join-ir\-red\-uc\-ible elements such that $1=b_1+ \cdots +b_n$. Then every
negation $\neg a$ in $L$ is of the form $\neg a = \Join_{i\in I} b_i$ for
some subset $I\subseteq\{1,\ldots, n\}$ (where, of course, $\neg a=0$ if
$I=\emptyset$).
In particular, $\neg b_{i}=  \Join_{j \ne i}b_{j}$.
\end{lemma}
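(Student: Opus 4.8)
The plan is to obtain the claim as a direct specialization of the general formula \eqref{arrow} for the arrow operation applied to a join of join-irreducible elements. Recall that in any Brouwer algebra $\neg a = a \rightarrow 1$, where $a \rightarrow 1$ is the least $c$ with $1 \le a + c$. Since $L \in \mathfrak{B}_n$, the set $X = \{b_1,\ldots,b_n\}$ is a finite antichain of join-irreducible elements with $1 = b_1 + \cdots + b_n = \sum X$, which is exactly the hypothesis under which \eqref{arrow} was established. I would therefore simply instantiate \eqref{arrow} with $b = 1$ and this $X$.

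Doing so yields
\[
\neg a = a \rightarrow 1 = \sum\set{b_i \in X : b_i \not\le a}.
\]
Setting $I = \set{i \in \{1,\ldots,n\} : b_i \not\le a}$, this reads $\neg a = \Join_{i\in I} b_i$, which is precisely the asserted form, with $I = \emptyset$ (and $\neg a = 0$) in the case that $b_i \le a$ for every $i$. Note that no finiteness of $L$ is needed here: the only finiteness invoked is that the antichain $X$ has $n$ elements, which is built into the definition of $\mathfrak{B}_n$.

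For the final assertion I would compute the index set for $a = b_i$, namely $I = \set{j : b_j \not\le b_i}$. Because $b_1,\ldots,b_n$ form an antichain, $b_j \le b_i$ fails for every $j \ne i$ while $b_i \le b_i$ holds, so $I = \set{j : j \ne i}$ and hence $\neg b_i = \Join_{j \ne i} b_j$, as claimed.

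There is no real obstacle in this argument, since the statement is an immediate consequence of \eqref{arrow}; the only points deserving attention are checking that the hypotheses of \eqref{arrow} are met --- that $1$ is a join of finitely many join-irreducible elements, which follows from $L \in \mathfrak{B}_n$ --- and then using the antichain property of the $b_i$ to pin down the index set in the special case.
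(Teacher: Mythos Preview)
Your proof is correct and is essentially the same as the paper's: both proofs simply apply formula~\eqref{arrow} with $b=1$ and $X=\{b_1,\ldots,b_n\}$ to obtain $\neg a=\sum_{i\in I}b_i$ with $I=\{i:b_i\not\le a\}$. Your write-up is in fact slightly more detailed, since you also spell out the antichain argument for the ``in particular'' clause, which the paper leaves implicit.
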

\begin{proof}
By (\ref{arrow}) we have $\neg a= \sum_{i \in I} b_{i}$,
where $I=\{i: b_i \not\leq a\}$.
\end{proof}

\begin{theorem}\label{thm:validity}
Let $\binom{n}{\lfloor n/2\rfloor} = k$. Then the following hold:
\begin{enumerate}[\rm (i)]
\item If  $L \in  \mathfrak{B}_{m}$ and $m \le n$, then
      $L\models \vph_{k}$;
\item if  $L \in  \mathfrak{B}_{m}^{\bot}$ and $m >n$ then $L\not\models
      \vph_{k}$.
\end{enumerate}
\end{theorem}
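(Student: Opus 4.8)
The plan is to work entirely with the explicit description of negation in $L$ provided by Lemma~\ref{lemman}. Fix an antichain $b_1,\ldots,b_m$ of join-irreducibles with $1=b_1+\cdots+b_m$. By Lemma~\ref{lemman} every negation equals $\sum_{i\in I}b_i$ for some $I\subseteq\{1,\ldots,m\}$, and conversely $a=\sum_{i\notin I}b_i$ satisfies $\neg a=\sum_{i\in I}b_i$ by (\ref{incomp}); thus as $a$ ranges over $L$ the set $I$ ranges over all subsets of $\{1,\ldots,m\}$. Recall that $\vee$ is interpreted by the meet $\times$ and that $L\models\vph_k$ means every substitution instance evaluates to~$0$. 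Working with the $\IPC$-equivalent form of $\vph_k$ that also carries the disjuncts $\neg p_1,\ldots,\neg p_k$ (legitimate since $\IPC\subseteq\Th(L)$) and writing $\neg a_\ell=\sum_{i\in I_\ell}b_i$, repeated use of (\ref{arrow}) reduces each disjunct to a join of $b_i$'s:
\[
\neg a_\ell=\sum_{i\in I_\ell}b_i,\qquad
\neg a_i\rightarrow\neg a_j=\sum_{l\in I_j\setminus I_i}b_l,\qquad
\neg(\neg a_1+\cdots+\neg a_k)=\sum_{l\notin\bigcup_\ell I_\ell}b_l.
\]
Since each $b_i\ne0$, a join $\sum_{l\in S}b_l$ is $0$ iff $S=\emptyset$. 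Hence $\neg a_\ell$ is $0$ iff $I_\ell=\emptyset$; the disjunct $\neg a_i\rightarrow\neg a_j$ is $0$ iff $I_j\subseteq I_i$; and the last disjunct is $0$ iff $\bigcup_\ell I_\ell=\{1,\ldots,m\}$. The interpretation of $\vph_k$ is the meet $\times$ of all these disjuncts.

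For part~(i) I will show that for $m\le n$ at least one disjunct is $0$ for every choice of the $I_\ell$; since $0\times c=0$ for all $c$, the whole meet is then $0$ and $L\models\vph_k$. Suppose instead that no disjunct vanishes. Then $I_1,\ldots,I_k$ are nonempty and pairwise $\subseteq$-incomparable, and $\bigcup_\ell I_\ell\ne\{1,\ldots,m\}$. If $m=1$, properness of the union forces every $I_\ell=\emptyset$, contradicting nonemptiness. If $m\ge2$, then $m\le n$ forces $n\ge2$; choosing $t\notin\bigcup_\ell I_\ell$ exhibits an antichain of size $k$ in $\P(\{1,\ldots,m\}\setminus\{t\})$, so Sperner's Theorem gives $k\le\binom{m-1}{\lfloor(m-1)/2\rfloor}$. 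As the central binomial coefficient is nondecreasing and strictly increasing for arguments $\ge1$, and $m-1\le n-1$, this yields $k\le\binom{n-1}{\lfloor(n-1)/2\rfloor}<\binom{n}{\lfloor n/2\rfloor}=k$, a contradiction. (Throughout I take $n\ge1$, since our frames, hence topwidths, are nonempty.)

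For part~(ii) I will instead produce a single substitution under which every disjunct is nonzero, and then invoke meet-irreducibility of~$0$. Because $m>n$ we have $m-1\ge n$, so $\P(\{1,\ldots,m-1\})$ contains an antichain of at least $k$ nonempty subsets $I_1,\ldots,I_k$ (a middle layer, replaced by the size-$1$ layer in the degenerate case $m-1=1$); pick $k$ of them. Setting $a_\ell=\sum_{i\notin I_\ell}b_i$ makes $\neg a_\ell=\sum_{i\in I_\ell}b_i$, and by the dictionary above every disjunct is a join over a nonempty index set: the $I_\ell$ are nonempty, pairwise incomparable (so each $I_j\setminus I_i\ne\emptyset$), and $\bigcup_\ell I_\ell\subseteq\{1,\ldots,m-1\}$ omits $m$. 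Thus each disjunct is $\ne0$. Since $L\in\mathfrak{B}_m^\bot$ has meet-irreducible $0$, a finite meet of nonzero elements is nonzero (induct on the number of factors using that $0=x\times y$ implies $x=0$ or $y=0$); hence this substitution instance of $\vph_k$ is $\ne0$, so $L\not\models\vph_k$.

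The main obstacle is conceptual rather than computational: one must resist treating $I\mapsto\sum_{i\in I}b_i$ as a Boolean embedding, for it is \emph{not} meet-preserving, and so the interpretation of $\vph_k$ is a genuine meet that need not equal any single $\sum_{l\in S}b_l$. The two directions are therefore asymmetric. Validity~(i) uses only the easy implication that one zero factor kills the meet, and is purely combinatorial (Sperner together with monotonicity of central binomial coefficients). Refutation~(ii) needs the converse, that a meet of nonzero elements stays nonzero, which fails without meet-irreducibility of~$0$; this is exactly where the hypothesis $L\in\mathfrak{B}_m^\bot$ is indispensable and where the two statements genuinely diverge.
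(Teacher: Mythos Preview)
Your proof is correct and follows essentially the same route as the paper's: both reduce via Lemma~\ref{lemman} to subsets $I_\ell\subseteq\{1,\ldots,m\}$, invoke Sperner's Theorem for the combinatorial contradiction in~(i), and in~(ii) exhibit an antichain missing at least one index and appeal to meet-irreducibility of~$0$. Your treatment is slightly more explicit about edge cases (the cases $m=1$ and $m-1=1$) and about why a non-covering antichain is too small, a step the paper leaves implicit; and your use of the $\IPC$-equivalent form with the extra disjuncts $\neg p_\ell$ is harmless but not needed, since emptiness of some $I_\ell$ already forces a zero implication disjunct.
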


\begin{proof}
(i) Let $k$ and $n$ be as in the statement of the theorem. Let $L \in
\mathfrak{B}_{m}$, $m \le n$, with $b_{1}, \ldots, b_{m}$ join-irreducible
elements that join to $1$. In order to show that $\vph_k$ holds in $L$, we
take any sequence $a_i$ of $k$ elements in $L$ and show that $\vph_k$
evaluates to $0$ for $p_i=a_i$. If there are $i\neq j$ such that $\neg a_i$
and $\neg a_j$ are comparable then the first clause of $\vph_k$ is satisfied.
So suppose that all $\neg a_i$ are pairwise incomparable. We have to show
that then the last clause of $\vph_k$ is satisfied, i.e.\ that $\neg(\neg a_1
+ \ldots + \neg a_k) = 0$, or equivalently, $\sum_{i=1}^k \neg a_i =1$. By
Lemma~\ref{lemman} every $\neg a$ is of the form $\neg a = \Join_{i\in I}
b_i$. Note that $\Join_{i\in I} b_i \leq \Join_{j\in J} b_j$ if and only if
$I\subseteq J$, as follows from (\ref{incomp}). So to the $k$ incomparable
negations $\neg a_i$ corresponds a collection of $k$ pairwise
$\subseteq$-incomparable subsets of $\{1,\ldots, m\}$. Sperner's Theorem says
that $\binom{m}{\lfloor m/2\rfloor}$ is the maximum number $k$ for which
there is such an antichain of $k$ pairwise incomparable subsets of
$\{1,\ldots, m\}$. Hence because $\binom{m}{\lfloor m/2\rfloor} \leq k$, the
collection corresponding to the $\neg a_i$ covers all of $\{1,\ldots, m\}$,
and in particular
$$
\sum_{i=1}^k \neg a_i = \sum_{i=1}^m b_i =1,
$$
which is what we had to prove.

(ii)
Suppose that $L \in \mathfrak{B}_{m}^{\bot}$, with $m>n$: let
\[
I=\{b_1, \ldots, b_n, b_{n+1}, \ldots, b_{m}\}
\]
be an antichain of join-irreducible elements such that in $L$ we have
$1=\sum_{1 \le i \le m}b_{i}$. By Sperner's Theorem take a collection of $k$
incomparable subsets $\{I_i: 1\le i \le k\}$ of $\{1,\ldots, n\}$. For every
$i=1, \ldots, k$ choose $a_i$ so that $\neg a_i = \Join_{j\in I_i} b_j$. (The
proof of Lemma~\ref{lemman} shows how to achieve this: take $a_i=\sum_{j
\notin I_i}b_j$.) Then the negations $\neg a_i$ are incomparable because the
sets $I_i$ form an antichain, and hence the first clause of $\vph_k$ is
nonzero (as $0$ is meet-irreducible in $L$). We also have
\[
\sum_{i=1}^{k} \neg a_i = \sum_{\substack{1\le i \le k\\ j\in I_{i}}} b_j \neq 1
\]
(because no $b_{j}$, with $j>n$, is included), hence
$\neg(\Join_{i=1}^k \neg a_i)\neq 0$ and the second clause of $\vph_k$ is
also nonzero. So $\phi_k$ does not evaluate to $0$ in $L$, since in this
algebra, $0$ is meet-irreducible.
\end{proof}

\section{An application to the Medvedev lattice }\label{sec:addendum}

This section is an addendum to~\cite{SorbiTerwijn}. We thank Paul Shafer
\cite{Shafer} for pointing out some inaccuracies in that paper. In
\cite{SorbiTerwijn} logics of the form $\Th(\M/\mathbf{A})$ are studied,
where $\M$ is the Medvedev lattice, $\mathbf{A}\in \M$, and $\M/\mathbf{A}$
is the initial segment of $\M$ consisting of all $\mathbf{B}\in \M$ such that
$\mathbf{B}\le \mathbf{A}$. The Medvedev lattice arises from the following
reducibility on subsets of $\omega^\omega$ (also called \emph{mass
problems}): if $\mathcal{A}, \mathcal{B}$ are mass problems, then
$\mathcal{A}\le \mathcal{B}$, if there is an oracle Turing machine which,
when given as oracle any function $g \in \mathcal{B}$, computes a function
$f\in \mathcal{A}$. The \emph{Medvedev degrees}, or simply, \emph{M-degrees},
are the equivalence classes of mass problems under the equivalence relation
generated by $\le$. The collection of all M-degrees constitutes a bounded
distributive lattice, called the \emph{Medvedev lattice}, which turns out to
be in fact a Brouwer algebra, i.e.\ it is equipped with a suitable operation
$\rightarrow$, satisfying \eqref{eqn:arrow}. Hence every factor of the form
$\M/\mathbf{A}$ is itself a Brouwer algebra, being closed under
$\rightarrow$, with $\neg$ given by $\neg \mathbf{B}=\mathbf{B} \rightarrow
\mathbf{A}$. In the following we use the notation from~\cite{SorbiTerwijn},
to which the reader is also referred for more details and information about
the Medvedev lattice and intermediate propositional logics.

In order to show that there are infinitely many logics of the form
$\Th(\M/\mathbf{A})$, in~\cite{SorbiTerwijn} a sequence of M-degrees
$\mathbf{B}_n$, $n\in\omega$, is introduced. In Corollary 5.8 of
\cite{SorbiTerwijn} it is claimed that the logics $\Th(\M/\mathbf{B}_n)$ are
all different but no detailed proof of this is given. Below we prove that
indeed these logics are all different from each other. In particular for any
$f\in\omega^\omega$ consider the mass problem
$$
\B_f = \bigset{g\in\omega^\omega : g\not\leq_T f }:
$$
then the Medvedev degree $\mathbf{B}_f$ of $\B_f$ is join-irreducible,
\cite{Sorbi:Brouwer}.
Recall that the top element $1$ of $\M/\mathbf{B}_n$ is the join
$$
\mathbf{B}_n=\mathbf{B}_{f_1}\join \ldots \join \mathbf{B}_{f_n}
$$
where $\bigset{f_i: i \in \omega}$ is a collection of functions whose Turing
degrees are pairwise  incomparable. In particular, the top element of
$\M/\mathbf{B}_1$ is join-irreducible and the top elements of all other
factors $\M/\mathbf{B}_n$ are not. Hence $\Th(\M/\mathbf{B}_1)$ can be
distinguished from all the other theories by the formula \eqref{wlem}.
Namely, the \wlem\ holds in a factor $\M/\mathbf{A}$ if and only if
$\mathbf{A}$ is join-irreducible, cf.\ \cite{Sorbi:Quotient}.
We recall that the least element of $\mathfrak{M}$, and thus of every
factor $\M/\mathbf{A}$, is meet-irreducible.
Hence $\M/\mathbf{B}_{n}\in \mathfrak{B}_n^{\bot}$.
(This is in fact enough for the proof below.)

\begin{corollary} \label{cor}
If $m\ne n$ then $\Th(\M/\mathbf{B}_{m}) \ne \Th(\M/\mathbf{B}_{n})$.
\end{corollary}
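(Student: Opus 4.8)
The plan is to separate the two theories using a single formula $\vph_k$, with Theorem~\ref{thm:validity} as the only engine. The crucial structural input, already recorded just before the statement, is that $\M/\mathbf{B}_j \in \mathfrak{B}_j^\bot$ for every $j$; that is, each factor is a Brouwer algebra with meet-irreducible~$0$ whose top element is the join of an antichain of $j$ join-irreducible elements. This is exactly the hypothesis needed to feed the factors into both clauses of Theorem~\ref{thm:validity}.

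Without loss of generality assume $m < n$. First I would set $k = \binom{m}{\lfloor m/2\rfloor}$, so that in the notation of Theorem~\ref{thm:validity} the threshold index is taken to be $m$. Applying clause~(i) to the algebra $\M/\mathbf{B}_m$: since $\M/\mathbf{B}_m \in \mathfrak{B}_m$ and $m \le m$, we obtain $\M/\mathbf{B}_m \models \vph_k$, i.e. $\vph_k \in \Th(\M/\mathbf{B}_m)$. Then applying clause~(ii) to $\M/\mathbf{B}_n$: since $\M/\mathbf{B}_n \in \mathfrak{B}_n^\bot$ and $n > m$, we obtain $\M/\mathbf{B}_n \not\models \vph_k$, i.e. $\vph_k \notin \Th(\M/\mathbf{B}_n)$. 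Hence $\vph_k$ lies in one theory but not the other, so $\Th(\M/\mathbf{B}_m) \ne \Th(\M/\mathbf{B}_n)$, as required.

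There is essentially no obstacle once Theorem~\ref{thm:validity} is available; the only point requiring care is bookkeeping with the overloaded index, namely ensuring that the threshold playing the role of ``$n$'' in Theorem~\ref{thm:validity} sits in the range $m \le N < n$, so that clause~(i) governs the smaller factor and clause~(ii) the larger one. The choice $N = m$ achieves this precisely because $m < n$, and the meet-irreducibility of $0$ in each factor, which clause~(ii) requires, is exactly what the membership $\M/\mathbf{B}_n \in \mathfrak{B}_n^\bot$ supplies. I would close by noting that this argument establishes, with explicit separating formulas, the claim of Corollary~5.8 of~\cite{SorbiTerwijn}: the generalized weak-excluded-middle schemata $\vph_k$ already suffice to distinguish all the factors $\M/\mathbf{B}_n$ from one another.
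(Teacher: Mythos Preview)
Your proof is correct and follows essentially the same approach as the paper's: you pick the formula $\vph_k$ with $k=\binom{m}{\lfloor m/2\rfloor}$ for the smaller index and invoke the two clauses of Theorem~\ref{thm:validity} exactly as the authors do (with only the cosmetic difference that they assume $n<m$ rather than $m<n$). The paper additionally remarks, via Corollary~\ref{cor:characterization}, that Smorynski's $\sigma_n$ also separates the theories, but this is an aside and not part of the core argument.
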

\begin{proof}
Assume $n<m$, and let $k=\binom{n}{\lfloor n/2\rfloor}$. Since
$\M/\mathbf{B}_{n}\in \mathfrak{B}_n^{\bot}$, by Theorem~\ref{thm:validity},
we have that $\vph_{k} \in \Th(\M/\mathbf{B}_{n})$, but $\vph_{k} \notin
\Th(\M/\mathbf{B}_{m})$. Notice also that by
Corollary~\ref{cor:characterization}, we can now also conclude that
$\sigma_{n} \in \Th(\M/\mathbf{B}_{n})$, but  $\sigma_{n} \notin
\Th(\M/\mathbf{B}_{m})$
\end{proof}

\section{Acknowledgements}

Thanks to Paul Shafer for his comments on the paper \cite{SorbiTerwijn}. We
thank Lev Beklemishev for remarks about $\KC$, Wim Veldman for the reference
to Brouwer, and Rosalie Iemhoff for general discussions about $\IPC$.


\begin{thebibliography}{10}

\bibitem{Proofs} M.~Aigner and G.~M. Ziegler.
\newblock {\em Proofs from The Book}.
\newblock Springer-Verlag, Berlin Heidelberg New York, 3 edition, 2004.

\bibitem{Balbes-Dwinger:Book} R.~Balbes and P.~Dwinger.
\newblock {\em Distributive Lattices}.
\newblock University of Missouri Press, Columbia, 1974.

\bibitem{Brouwer1964} L.~E.~J. Brouwer.
\newblock Consciousness, {P}hilosophy, and {M}athematics.
\newblock In E.~W. Beth, H.J. Pos, and H.~J~.A. Hollak, editors, {\em
  Proceedings of the 10th International Congress of Philosophy, August 1948,
  Amsterdam. ({R}eprinted in: {P}. {B}enecerraf and {H}. {P}utnam (eds.),
  {P}hilosophy of {M}athematics, selected readings, {P}rentice-{H}all, 1964.)},
  volume~I. North-Holland, 1948.

\bibitem{ChagrovZakharyaschev} A.~Chagrov and M.~Zakharyaschev.
\newblock {\em Modal Logic}, volume~35 of {\em Oxford Logic Guides}.
\newblock Oxford University Press, Oxford, 1997.

\bibitem{DummettLemmon} M.~A.~E. Dummett and E.~J. Lemmon.
\newblock Modal logics between $s4$ and $s5$.
\newblock {\em Z.\ Math.\ Logik Grundlag.\ Math.}, 5:250--264, 1959.

\bibitem{Fitting} M.~C. Fitting.
\newblock {\em Intuitionistic Logic, Model Theory and Forcing}.
\newblock Studies in Logic and the Foundations of Mathematics Vol.\ 21.
North-Holland, Amsterdam, 1969.

\bibitem{Gabbay} D.~M. Gabbay.
\newblock {\em Semantical Investigations in Heyting's Intuitionistic Logic},
  volume 148 of {\em Studies in Epistemology, Logic, Methodology, and
  Philosophy of Science}.
\newblock D. Reidel, Dordrecht, Boston, London, 1981.

\bibitem{Harrop1958} R.~Harrop.
\newblock On the existence of finite models and decisions procedures for
  propositional calculi.
\newblock {\em Proc. Cambridge Philos. Soc.}, 54:1--13, 1958.

\bibitem{Jankov} A. V. Jankov,
\newblock Calculus of the weak law of the excluded middle.
\newblock {\em Izv. Akad. Nauk SSSR}, Ser. Mat. 32 (1968) 1044--1051. (In Russian.)

\bibitem{Ono-Kripke} I.~Ono.
\newblock Kripke models and intermediate logics.
\newblock {\em Publ. Res. Inst. Math. Sci.}, 6:461--476, 1970.

\bibitem{Shafer} P.~Shafer.
\newblock email correspondence, May 2009.

\bibitem{Smorynski} C.~Smorynski.
\newblock {\em Investigations of Intuitionistic Formal Systems by Means of
  Kripke Models}.
\newblock PhD thesis, University of Illinois at Chicago, 1973.

\bibitem{Sorbi:Brouwer} A.~Sorbi.
\newblock Embedding {B}rouwer algebras in the {M}edvedev lattice.
\newblock {\em Notre Dame J. Formal Logic}, 32(2):266--275, 1991.

\bibitem{Sorbi:Quotient} A.~Sorbi.
\newblock Some quotient lattices of the {M}edvedev lattice.
\newblock 37:167--182, 1991.

\bibitem{SorbiTerwijn} A.~Sorbi and S.~A. Terwijn.
\newblock Intermediate logics and factors of the {M}edvedev lattice.
\newblock {\em Ann.\ Pure Appl.\ Logic}, 155(2):69--86, 2008.

\bibitem{Sperner1928} E.~Sperner.
\newblock Ein {S}atz \"uber {U}ntermengen einer endlichen {M}enge.
\newblock {\em Math. Zeitschrift}, 27:544--548, 1928.

\end{thebibliography}
\end{document}